\documentclass{article}

%% IF YOU HAVE FONTS INSTALLED
%\usepackage{mtpro2}
%\usepackage{mathtime}
\usepackage{amsmath,amsfonts,amssymb,amsthm}
\usepackage{mathrsfs}
\usepackage{mathtools}

\newtheorem{theorem}{Theorem}

\newtheorem{lemma}[theorem]{Lemma}
\newtheorem{corollary}[theorem]{Corollary}

\theoremstyle{definition}

\newtheorem*{remark}{Remark}

\begin{document}

\title{A New Proof of the Weyl-von Neumann-Berg Theorem}
\markright{A New Proof of the Weyl-von Neumann-Berg Theorem}
\author{
        \small
        \begin{tabular}{cc}
        Longxiang Fan & Shichang Song \\
        School of Mathematical Sciences & School of Mathematics and Statistics \\
        Beijing Normal University & Beijing Jiaotong University \\
        \texttt{longxiangfan@hotmail.com} & \texttt{ssong@bjtu.edu.cn}
        \end{tabular}
        }

\maketitle

\begin{abstract}
We give a new proof of the Weyl-von Neumann-Berg theorem. Our proof improves Halmos' proof in 1972 by observing the fact that every compact set in the complex plane is the continuous image of a compact set in the real line.
\end{abstract}

\noindent
The Weyl-von Neumann-Berg theorem was proved by Berg \cite{Berg1971AnEO} in 1971. In 1972, Halmos \cite{ContinuousFunctionsofHermitianOperators} gave a new proof. Halmos used the Alexandroff-Hausdorff theorem, i.\,e. every compact metric space is the continuous image of the Cantor set. However, the Alexandroff-Hausdorff theorem is rather strong and not necessary. We follow Halmos' proof strategy to give an improved proof by the observation: every compact space in the complex plane is the continuous image of a compact set in the real line. To make our paper self-contained, we will give a detailed proof.

\section{Lemmas from general topology}
In this section we will give several rather elementary results necessary for our new proof of the Weyl-von Neumann-Berg theorem.
\begin{lemma}\label{continuous image of K}
	Every nonempty compact set in the complex plane is the continuous image of a compact set in the real line.
\end{lemma}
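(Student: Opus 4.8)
The plan is to reduce the statement to the single concrete instance that the closed unit square $[0,1]^{2}$ is the continuous image of a compact subset of $\mathbb{R}$, and then to exhibit such a surjection by hand using the Cantor set. For the reduction, identify $\mathbb{C}$ with $\mathbb{R}^{2}$ and let $K\subseteq\mathbb{R}^{2}$ be nonempty and compact. Since $K$ is bounded, there is an affine homeomorphism $\psi$ of $\mathbb{R}^{2}$ with $\psi([0,1]^{2})\supseteq K$, so $\psi^{-1}(K)\subseteq[0,1]^{2}$ is again compact. If $h\colon C_{0}\to[0,1]^{2}$ is a continuous surjection with $C_{0}\subseteq\mathbb{R}$ compact, then $C:=h^{-1}\bigl(\psi^{-1}(K)\bigr)$ is closed in $C_{0}$, hence compact, $C\subseteq\mathbb{R}$, and $\psi\circ h|_{C}\colon C\to K$ is a continuous surjection onto the original set. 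So everything comes down to constructing $h$.

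For $h$ I would use the Cantor set $\Delta\subseteq[0,1]$, regarded as the set of real numbers admitting a base-$3$ expansion $\sum_{n\ge1}c_{n}3^{-n}$ with every digit $c_{n}\in\{0,2\}$; this set is closed and bounded, hence compact, and a short estimate shows that for points of $\Delta$ such an expansion is unique. Define $h\colon\Delta\to[0,1]^{2}$ by separating the ternary digits of $x=\sum_{n}c_{n}3^{-n}$ into their odd- and even-indexed subsequences and halving, namely $h(x)=\bigl(\sum_{n}(c_{2n-1}/2)\,2^{-n},\ \sum_{n}(c_{2n}/2)\,2^{-n}\bigr)$. Surjectivity is immediate: given $(y,z)\in[0,1]^{2}$ pick binary expansions $y=\sum_{n}a_{n}2^{-n}$ and $z=\sum_{n}b_{n}2^{-n}$ with $a_{n},b_{n}\in\{0,1\}$, and read off $c_{2n-1}=2a_{n}$, $c_{2n}=2b_{n}$ to obtain a preimage in $\Delta$.

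The step that needs care --- and the main obstacle --- is the continuity of $h$. A naive attempt to map $[0,1]$ or $[0,1]^{2}$ directly into $\mathbb{R}$ by de-interleaving digits fails, since binary expansions are not unique and the resulting map is discontinuous; the point of routing through the Cantor set is that for $x,x'\in\Delta$ the uniqueness estimate gives $|x-x'|\ge3^{-k}$ whenever their ternary expansions first disagree in the $k$-th place, so $x$ close to $x'$ forces their ternary digits to agree on a long initial block, hence the images $h(x)$ and $h(x')$ to agree on a correspondingly long block of binary digits, whence $|h(x)-h(x')|$ is small. This yields (uniform) continuity of $h$, and together with the reduction above it proves the lemma. (Alternatively one could quote the existence of a Peano curve $[0,1]\to[0,1]^{2}$ in place of $h$, but the Cantor set construction keeps the proof self-contained.)
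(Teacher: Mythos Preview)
Your proof is correct and follows the same reduction as the paper: identify $\mathbb{C}$ with $\mathbb{R}^{2}$, assume the target set lies in $[0,1]^{2}$, produce a continuous surjection from a compact subset of $\mathbb{R}$ onto $[0,1]^{2}$, and pull the given compact set back along it. The only divergence is in the choice of that surjection. The paper simply quotes the Peano curve $f\colon[0,1]\to[0,1]^{2}$ (and remarks afterwards that its construction is nontrivial and has no simple closed form), whereas you build an explicit map $h\colon\Delta\to[0,1]^{2}$ from the Cantor set by de-interleaving ternary digits, proving continuity via the distance estimate $|x-x'|\ge 3^{-k}$ for points of $\Delta$ whose expansions first differ at place $k$. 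Your variant is more self-contained and arguably more elementary, at the cost of producing a totally disconnected domain rather than an interval; for the lemma as stated, and for its use later in the paper, nothing is lost. You even note the Peano-curve alternative yourself, so you have essentially subsumed the paper's argument.
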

\begin{proof}
	Since the complex plane and $\mathbb{R}^2$ are homeomorphic, we consider compact sets in $\mathbb{R}^2$.
	Without loss of generality, let $\Lambda$ be a non-empty compact subset of the unit square in $\mathbb{R}^2$. 
Let $f \colon [0,1] \longrightarrow [0,1]^2$ be the well-known Peano curve which is a continuous surjection.
Let $K$ be the preimage of $\Lambda$ under $f$ and define $\phi:=f|_K$. It is easy to verify that $K$ is compact and $\phi$ is continuous by the compactness and continuity of $f$.
\end{proof}
\begin{remark}
    The construction of a Peano curve is surely a nontrivial result. It is defined recursively. For more details, please check $\cite{Peanocurve}$. As far as we know, there may not be an explicit or simple expression of it.
\end{remark}

Let $X$ be a metric space. A function $f(x)\colon X \to\mathbb{R}$ is lower semicontinuous at the point $x_0\in X$ if $f(x_0)\leq \liminf\limits_{x\to x_0}f(x)$.

\begin{lemma}\label{semicontinuous is Borel}
	Let $f\colon \Lambda \rightarrow \mathbb{R}$ be lower semicontinuous, where $\Lambda$ is a closed subset of $\mathbb{C}$.
	Then $f$ is a Borel function.
\end{lemma}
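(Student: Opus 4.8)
The plan is to reduce the Borel measurability of $f$ to the openness of its super-level sets, using the standard equivalent formulation of lower semicontinuity. First I would recall that to verify $f$ is Borel it suffices to check that $f^{-1}\bigl((a,\infty)\bigr)$ is a Borel subset of $\Lambda$ for every $a\in\mathbb{R}$ (in fact only for $a$ in a countable dense set), since the rays $(a,\infty)$ generate the Borel $\sigma$-algebra of $\mathbb{R}$. Moreover, because $\Lambda$ is closed, hence Borel, in $\mathbb{C}$, any Borel subset of $\Lambda$ equipped with the subspace topology is automatically Borel in $\mathbb{C}$, so the phrase ``Borel function'' is unambiguous here; the hypothesis that $\Lambda$ is closed is used only for this identification.

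The key step is then to show that $E_a:=\{x\in\Lambda : f(x)>a\}$ is relatively open in $\Lambda$ for each $a\in\mathbb{R}$. I would argue by contradiction: suppose $x_0\in E_a$ but no neighborhood of $x_0$ in $\Lambda$ is contained in $E_a$. Then for each $n\in\mathbb{N}$ there is a point $x_n\in\Lambda$ with $|x_n-x_0|<1/n$ and $f(x_n)\le a$, so that $\liminf_{x\to x_0}f(x)\le a<f(x_0)$, contradicting lower semicontinuity of $f$ at $x_0$. (If $x_0$ happens to be isolated in $\Lambda$ the situation is vacuous, since $\{x_0\}$ is then already open in $\Lambda$.) Hence every $E_a$ is open in $\Lambda$, in particular Borel, and combining this with the reduction of the first paragraph completes the proof.

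I expect the only delicate point to be the bookkeeping around the definition of $\liminf_{x\to x_0}f(x)$ --- whether it is taken over punctured neighborhoods and how isolated points of $\Lambda$ are handled --- but this is a routine technicality rather than a genuine obstacle, and the argument is otherwise entirely elementary. If one prefers, the same contradiction can be phrased directly from the $\sup_{\delta>0}\inf_{0<|x-x_0|<\delta}f(x)$ description of the limit inferior, avoiding sequences altogether.
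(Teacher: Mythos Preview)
Your argument is correct and rests on the same idea as the paper's: lower semicontinuity is equivalent to the sublevel sets $\{f\le r\}$ being closed (equivalently, the super-level sets $\{f>a\}$ being open), and this immediately gives Borel measurability. The only cosmetic difference is that you work directly with the open sets $\{f>a\}$ and derive their openness from the $\liminf$ definition, whereas the paper cites the closedness of $\{f\le r\}$ (calling it ``the Baire theorem'') and then checks that each $\{a\le f\le b\}$ is Borel.
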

\begin{proof}
	For all $a \leq b$, we have
	$$\{ a \leq f \leq b \}=\{ a \leq f \} \cap \{ f \leq b \} = (\bigcup\limits_{n=1}^{\infty} \{ f \leq a - \frac{1}{n} \})^{c} \cap \{ f \leq b \}.$$
	By the Baire theorem, $\{ f \leq r \}$ is closed for each $r \in \mathbb{R}$, and thus $\{ a \leq f \leq b \}$ is Borel.
	Hence, $f$ is Borel.
\end{proof}

\begin{lemma}\label{Borel function id}
	Suppose that $\Lambda$ is a non-empty compact set in the complex plane, $K$ is a compact subset of $\mathbb{R}$ and $\phi$ is a continuous function such that $\phi(K) = \Lambda$, then there exists a Borel function $\psi \colon \Lambda \rightarrow K$ such that $\bigl(\phi \circ \psi\bigr) (z) = z $ for all $z \in \Lambda$.
\end{lemma}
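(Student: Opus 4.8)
The plan is to exploit the linear order on $\mathbb{R}$ — which is exactly what we bought ourselves by passing through Lemma~\ref{continuous image of K} — to choose a canonical point from each fiber of $\phi$. For $z \in \Lambda$ set $F_z := \phi^{-1}(\{z\}) = \{\, t \in K : \phi(t) = z \,\}$. Since $\phi(K) = \Lambda$, each $F_z$ is nonempty; since $\phi$ is continuous and $\{z\}$ is closed, $F_z$ is a closed subset of the compact set $K$, hence itself compact. A nonempty compact subset of $\mathbb{R}$ has a least element, so I would define $\psi(z) := \min F_z$, which lies in $K$. By construction $\phi\bigl(\psi(z)\bigr) = z$ for every $z \in \Lambda$, so the entire content of the lemma reduces to checking that $\psi$ is a Borel function.

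To do that I would prove that $\psi$ is lower semicontinuous on $\Lambda$ and then invoke Lemma~\ref{semicontinuous is Borel} (legitimately, since $\Lambda$ is compact, hence a closed subset of $\mathbb{C}$). The crux is the set identity
$$\{\, z \in \Lambda : \psi(z) \leq a \,\} = \phi\bigl( K \cap (-\infty, a] \bigr), \qquad a \in \mathbb{R}.$$
Indeed, $\psi(z) \leq a$ says exactly that the closed nonempty set $F_z$ meets $(-\infty,a]$, i.e.\ that $z = \phi(t)$ for some $t \in K$ with $t \leq a$; the reverse inclusion is equally immediate since any such $t$ forces $\psi(z) = \min F_z \leq t \leq a$. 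Now $K \cap (-\infty,a]$ is compact and $\phi$ is continuous, so the right-hand side is compact, hence closed, in $\mathbb{C}$. Thus $\{\psi \leq a\}$ is closed for every $a$, which is precisely lower semicontinuity, and Lemma~\ref{semicontinuous is Borel} finishes the proof.

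I do not anticipate a genuine obstacle. The only step that needs a moment's care is verifying that each fiber $F_z$ is compact — so that $\min F_z$ both exists and belongs to $K$ — and this uses continuity of $\phi$ together with compactness of $K$. Conceptually, the point is that order completeness of $\mathbb{R}$, which has no analogue in $\mathbb{C}$, is what supplies the canonical Borel selection $\psi$; this is the whole reason Lemma~\ref{continuous image of K} was arranged to land in the real line rather than leaving us in the plane.
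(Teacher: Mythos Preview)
Your proof is correct and follows the same strategy as the paper: define $\psi(z)$ as the minimum of the fiber $\phi^{-1}(\{z\})$, show that $\psi$ is lower semicontinuous, and invoke Lemma~\ref{semicontinuous is Borel}. The only difference is cosmetic --- the paper verifies lower semicontinuity by a sequential argument (extract a convergent subsequence of $\psi(z_n)$ and use continuity of $\phi$), whereas your sublevel-set identity $\{\psi \leq a\} = \phi\bigl(K \cap (-\infty,a]\bigr)$ is a slightly cleaner route to the same conclusion.
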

\begin{proof}
	Define
	\begin{align*}
		\psi \colon \Lambda &\rightarrow K\\
		z &\mapsto \inf\{ x\in K \mid \phi(x)=z \}.
	\end{align*}
	It is easy to verify that $\{ x\in K \mid \phi(x)=z \}$ is a compact set for all $z \in \Lambda$ by the continuity of $\phi$ and the compactness of $K$. Fix a point $z$ in $\Lambda$ and choose an arbitrary sequence $\{ z_n \}_{n=1}^{\infty}$ that converges to $z$, then there exists a subsequence $\{ \psi(z_{\sigma(n)}) \}_{n=1}^{\infty}$ of $\{ \psi(z_n) \}_{n=1}^{\infty}$ that converges in $K$. Let $x_{\sigma(n)} = \psi(z_{\sigma(n)})$ for all $n \in \mathbb{N}^+$ and let $x$ be the limit point of $\{ x_{\sigma(n)} \}_{n=1}^{\infty}$. Then we have
	$$\bigl(\phi \circ \psi\bigr)(z)=\phi(\inf\{ x\in K \mid \phi(x)=z \}) = z$$
	Meanwhile, the continuity of $\phi$ implies that
	$$z=\lim\limits_{n \to \infty}z_{\sigma(n)}=\lim\limits_{n\to\infty}\phi(x_{\sigma(n)})=\phi(x)$$
	i.\,e., $\psi(z)=\inf\{ x\in K \mid \phi(x) =z  \} \leq x$. Thus $\psi(z) \leq \liminf_{n \rightarrow \infty}\psi(z_n)$, which follows that $\psi$ is lower semicontinuous, and thus Borel by Lemma \ref{semicontinuous is Borel}.
\end{proof}

\section{New Proof of Weyl-von Neumann-Theorem}
Now we start our new proof of the Weyl-von Neumann-Berg theorem.

\begin{theorem}\label{Every bounded normal operator on a Hilbert space is the continuous image of a Hermitian one}
	Every bounded normal operator on a Hilbert space is the continuous image of a Hermitian one.
\end{theorem}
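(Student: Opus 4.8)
The plan is to realize $N$ through the functional calculus applied to a self-adjoint operator manufactured from the spectrum of $N$. First I would set $\Lambda := \sigma(N)$, the spectrum of the bounded normal operator $N$; it is a nonempty compact subset of $\mathbb{C}$. Lemma~\ref{continuous image of K} then supplies a compact set $K \subseteq \mathbb{R}$ and a continuous surjection $\phi \colon K \to \Lambda$, and Lemma~\ref{Borel function id} supplies a Borel function $\psi \colon \Lambda \to K$ with $\phi \circ \psi = \mathrm{id}_{\Lambda}$. Since $K$ is bounded in $\mathbb{R}$, $\psi$ is a bounded real-valued Borel function on $\Lambda = \sigma(N)$, so it can be fed into the Borel functional calculus of $N$.

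Next I would form $A := \psi(N)$ via the Borel functional calculus. Because $\psi$ is real-valued, $A$ is self-adjoint (Hermitian), and the spectral mapping inclusion gives $\sigma(A) \subseteq \overline{\psi(\sigma(N))} \subseteq K$. After extending $\phi$ to a continuous function on all of $\mathbb{R}$ (for instance by the Tietze extension theorem, though only the values on $K \supseteq \sigma(A)$ matter), the operator $\phi(A)$ is well-defined through the ordinary continuous functional calculus for the self-adjoint operator $A$.

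Finally I would invoke the composition (substitution) rule of the functional calculus: for the Borel function $\psi$ and the continuous function $\phi$ one has $\phi(\psi(N)) = (\phi \circ \psi)(N)$. Since $\phi \circ \psi = \mathrm{id}_{\Lambda}$ and the identity function applied to $N$ returns $N$, this yields $\phi(A) = N$. Thus $N$ is exhibited as the continuous image $\phi(A)$ of the Hermitian operator $A$, which is exactly the assertion of the theorem.

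The step I expect to be the main obstacle is the rigorous justification of the composition rule when $\psi$ is only Borel rather than continuous: one must verify that the spectral measure of $A = \psi(N)$ is the pushforward under $\psi$ of the spectral measure of $N$, so that integrating the continuous function $\phi$ against it genuinely reproduces $(\phi \circ \psi)(N)$. The remaining ingredients—nonemptiness and compactness of $\sigma(N)$, self-adjointness of $\psi(N)$, and the spectral mapping inclusion $\sigma(\psi(N)) \subseteq \overline{\psi(\sigma(N))}$—are standard consequences of the spectral theorem for bounded normal operators.
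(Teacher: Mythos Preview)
Your argument is correct, and it takes a genuinely different route from the paper. The paper (following Halmos) reduces to the case where the normal operator has a cyclic vector, realizes it concretely as multiplication by the identity on $L^2(\Lambda,\mu)$, pushes $\mu$ forward along $\psi$ to a measure $\gamma$ on $K$, builds an explicit unitary $T\colon L^2(\gamma)\to L^2(\mu)$ by $g\mapsto g\circ\psi$, and then exhibits the Hermitian operator as multiplication by $\mathrm{id}_K$ on $L^2(\gamma)$. You instead stay on the abstract Hilbert space, set $A=\psi(N)$ via the Borel functional calculus, and invoke the composition rule $\phi(\psi(N))=(\phi\circ\psi)(N)$. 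The trade-off is clear: the paper's approach is hands-on and essentially \emph{proves} the needed instance of the composition rule by constructing the pushforward measure and the intertwining unitary, at the cost of the cyclic-vector reduction (which it sweeps under ``without loss of generality''); your approach is shorter, avoids that reduction entirely, and treats the composition rule as a black box from spectral theory. You are right that the only nontrivial point in your version is that the spectral measure of $\psi(N)$ is the pushforward $E\circ\psi^{-1}$ of the spectral measure $E$ of $N$, which is exactly the standard lemma behind the composition rule; once that is granted, your proof is complete.
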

\begin{proof}
	Let $A$ be a bounded normal operator on a Hilbert space $\mathscr{H}$ and $\Lambda$ be the spectrum of $A$, then there exists a non-empty conpact set $K \subseteq \mathbb{R}$, a continuous function $\phi \colon K \rightarrow \Lambda$ and a Borel function $\psi$ such that $\phi(K)=\Lambda$ and $\phi \circ \psi$ is the identity map on $\Lambda$ by Lemma \ref{continuous image of K} and Lemma \ref{Borel function id}.
	\par
	Without loss of generality, suppose that $A$ has a cyclic vector in $\mathscr{H}$, i.\,e., there exists a vector $x \in \mathscr{H}$ such that $\mathrm{span}\{ A^n x \}_{n=1}^{\infty}$ is dense in $\mathscr{H}$, then there exists a measure $\mu \in [0,+\infty)$ on $\Lambda$ and a unitary operator $U$ from $L^2(\mu)$ onto $\mathscr{H}$ such that for all $f \in L^2(\mu)$, $(U^{-1}AU)(f)=\mathrm{id}_{\Lambda} \cdot f$ by the spectral theorem; see \cite{WhatDoestheSpectralTheoremSay}. Let $\gamma=\mu \circ \psi^{-1}$ be the induced measure on $K$ \cite[Chapter 6, Theorem 1.5]{RealAnalysis}. Define
	\begin{align*}
		T \colon L^2(\gamma) &\rightarrow L^2(\mu)\\
		g &\mapsto g \circ \psi.
	\end{align*}
	We are going to prove that $T$ is a unitary operator by change of variables in Lebesgue integrals \cite[\S 39]{MeasureTheory}. Indeed we have
	$$\|Tg\|^2 = \int_{\Lambda} |g \circ \psi|^2 d\mu = \int_{K} |g|^2 d(\mu \circ \psi^{-1}) = \int_{K} |g|^2 d\gamma = \|g\|^2$$
	and
	$$(Tg,f) = \int_{\Lambda} (g \circ \psi) \cdot \overline{(f \circ \phi \circ \psi)} d\mu = \int_{K} g \cdot \overline{(f \circ \phi)} d(\mu \circ \psi^{-1}) = \int_{K} g \cdot \overline{(f \circ \phi)} d\gamma,$$
	so the adjoint operator $T^*$ of $T$ is that $T^*f=f \circ \phi$.
	\par
	Moreover, we have that $T(T^*f) = T(f \circ \phi) = f \circ \phi \circ \psi = f$, which implies that every $f \in L^2(\mu)$ has a preimage $T^*f$, i.\,e., $T$ is a surjective isometry, and thus unitary.
	\par
	To make our statement simple, suppose that $A$ is the normal operator on $L^2(\mu)$ by the isometry between $\mathscr{H}$ and $L^2(\mu)$, then we have
	\begin{align*}
		(T(\phi \cdotp T^*f))(z)&=(T(\phi \cdotp (f \circ \phi)))(z)=((\phi \cdotp (f \circ \phi))\circ \psi)(z) \\
		&=\phi(\psi(z)) \cdotp f(\phi(\psi(z)))=zf(z)
	\end{align*}
	We can easily get that $A(g)=\phi \cdotp g$ if we write $g=T^*f$ and suppose that $A$ is the normal operator on $L^2(\gamma)$ by using the isometry $T$. Define
	\begin{align*}
		B \colon L^2(\gamma) &\rightarrow L^2(\gamma)\\
		g &\mapsto \mathrm{id}_{K} \cdot g
	\end{align*}
	then $B$ is a Hermitian operator on $L^2(\gamma)$ and $\phi(B)=A$. (All we need to do is to verify that $\phi(B)=A$ if $\phi$ is a polynomial and use the Weierstrass approximation theorem and the Gelfand-Naimark theorem \cite[(1.20) The Gelfand-Naimark Theorem]{ACourseInAbstractHarmonicAnalysis}.) This completes our proof.
\end{proof}

\begin{corollary}[Weyl-von Neumann-Berg Theorem]\label{Weyl-von Neumann-Berg theorem}
	Every bounded normal operator on a separable Hilbert space is the sum of a diagonal one and a compact one.
\end{corollary}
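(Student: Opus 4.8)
The plan is to reduce the Corollary to the classical Weyl--von Neumann theorem for Hermitian operators by applying Theorem~\ref{Every bounded normal operator on a Hilbert space is the continuous image of a Hermitian one}. Given a bounded normal operator $A$ on a separable Hilbert space $\mathscr{H}$, I would use that theorem to write $A=\phi(B)$ with $B$ a bounded Hermitian operator on $\mathscr{H}$ and $\phi$ continuous on a compact subset of $\mathbb{R}$ containing $\sigma(B)$; by the Tietze extension theorem I may then assume $\phi\colon\mathbb{R}\to\mathbb{C}$ is continuous everywhere, which does not change $\phi(B)$. Next I would invoke the Weyl--von Neumann theorem for Hermitian operators (this is where separability of $\mathscr{H}$ enters; see, e.g., \cite{ContinuousFunctionsofHermitianOperators}) to split $B=D+C$ with $D$ diagonal with respect to some orthonormal basis $\{e_n\}$ of $\mathscr{H}$ and $C$ compact. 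Since a continuous function of the diagonal operator $D$ is again diagonal in the basis $\{e_n\}$, the operator $\phi(D)$ is diagonal, so it remains only to show that $\phi(B)-\phi(D)=\phi(D+C)-\phi(D)$ is compact; then $A=\phi(D)+\bigl(\phi(D+C)-\phi(D)\bigr)$ is the required decomposition.

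For the compactness I would argue in two steps. First, when $\phi$ is a polynomial, $\phi(D+C)-\phi(D)$ expands as a finite sum of products of operators, each product having at least one factor equal to $C$, and is therefore compact because the compact operators form a two-sided ideal among the bounded operators on $\mathscr{H}$. Second, for a general continuous $\phi$, I would fix a compact interval $I\subseteq\mathbb{R}$ containing $\sigma(B)\cup\sigma(D)$ (both spectra are bounded by $\|B\|+\|C\|$) and use the Weierstrass approximation theorem to choose polynomials $p_k\to\phi$ uniformly on $I$. The isometry of the continuous functional calculus for a Hermitian operator $X$, namely $\|\psi(X)\|=\sup_{t\in\sigma(X)}|\psi(t)|$ --- which follows from the spectral theorem together with the Gelfand--Naimark theorem already used in the proof of Theorem~\ref{Every bounded normal operator on a Hilbert space is the continuous image of a Hermitian one} --- yields $\|\phi(D+C)-p_k(D+C)\|\to 0$ and $\|\phi(D)-p_k(D)\|\to 0$. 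Hence $\phi(D+C)-\phi(D)$ is the operator-norm limit of the compact operators $p_k(D+C)-p_k(D)$, and so it is compact.

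The main obstacle will be making the appeal to Theorem~\ref{Every bounded normal operator on a Hilbert space is the continuous image of a Hermitian one} fully rigorous, since that theorem was established only under the cyclic-vector hypothesis. To remove it, I would decompose an arbitrary bounded normal operator on a separable Hilbert space as a countable orthogonal direct sum of normal operators each having a cyclic vector, run the argument above on each summand, and reassemble; the one point requiring care is to choose the compact parts $C_n$ with $\sum_n\|C_n\|<\infty$ (for instance $\|C_n\|\le 2^{-n}$, which the Weyl--von Neumann theorem allows), so that $\bigoplus_n C_n$ is still compact while $\bigoplus_n D_n$ is diagonal. Beyond this bookkeeping, the proof uses nothing more than the ideal property of the compact operators, uniform polynomial approximation, and the classical Hermitian Weyl--von Neumann theorem, which I would cite rather than reprove.
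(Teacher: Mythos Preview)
Your proof is correct and follows essentially the same route as the paper's: write $A=\phi(B)$ via Theorem~\ref{Every bounded normal operator on a Hilbert space is the continuous image of a Hermitian one}, split $B=D+C$ by the Hermitian Weyl--von Neumann theorem, observe that $\phi(D)$ is diagonal, and show $\phi(D+C)-\phi(D)$ is compact by polynomial approximation together with the ideal property and norm-closedness of the compact operators. Your extra paragraph on the cyclic-vector reduction via a direct-sum argument goes beyond what the paper does; the paper absorbs that reduction into the ``without loss of generality'' clause in the proof of Theorem~\ref{Every bounded normal operator on a Hilbert space is the continuous image of a Hermitian one} (legitimate there because $\phi$ and $\psi$ depend only on $\sigma(A)$, so the same $\phi$ works simultaneously on every cyclic summand and one obtains a single Hermitian $B$ on all of $\mathscr{H}$), after which no further bookkeeping is needed in the Corollary.
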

\begin{proof}
	Suppose that $A$ is a bounded normal operator on a separable Hilbert space. By Theorem \ref{Every bounded normal operator on a Hilbert space is the continuous image of a Hermitian one}, there are a Hermitian operator $B$ and a continuous function $\phi$ such that $\phi(B)=A$. By the Weyl-von Neumann theorem \cite{Neumann1935CharakterisierungDS}, we can get $B=D+C$ where $D$ is a diagonal operator and $C$ is a compact one. Extend $\phi$ to a continuous function which is defined on a compact set that includes the spectra of $B$ and $D$. The Gelfand-Naimark theorem states that such extension exists, and $\phi(B)$ and $\phi(D)$ are bounded linear operators.
	\par
	By the Weierstrass approximation theorem, there exists a sequence of polynomials $\{ p_n \}_{n=1}^{\infty}$ that converges to $\phi$ uniformly, which implies that
	$$\lim\limits_{n \rightarrow \infty} p_n(B) = \phi(B) \, , \, \lim\limits_{n \rightarrow \infty} p_n(D) = \phi(D).$$
	\par
	Obviously $\phi(D)$ is diagonal. Let $C_n=p_n(B)-p_n(D)=p_n(D+C)-p_n(D)$.
	Since $(D+C)^k - D^k = D^k + D^{k-1}C + \cdots + C^k - D^k =  D^{k-1}C + \cdots + C^k$ for each $k\in\mathbb{N}^+$, we have that $(D+C)^k - D^k$ is compact, and thus $C_n$ is compact because the set of all compact operators is an ideal in the bounded linear operator space.
	Let $L:=\phi(B)-\phi(D)$. Then $L=\lim_{n\to\infty}C_n$. By the closedness of the ideal of compact operators, $L$ is compact. Then $A=\phi(B)=\phi(D)+L$, which completes our proof.
\end{proof}

\begin{remark}
	Our proof of the above corollary is completely the same as Halmos' proof in \cite[Corollary]{ContinuousFunctionsofHermitianOperators}. We just want to make our paper self-contained.
\end{remark}

\textbf{Acknowledgment.} This work is part of Longxiang Fan's bachelor thesis at the Beijing Jiaotong University supervised by Shichang Song in 2022. We are grateful to Jean-Christophe Bourin for pointing our mistake on the construction of the Peano curve.

\bibliographystyle{amsalpha}
\bibliography{references}

\providecommand{\bysame}{\leavevmode\hbox to3em{\hrulefill}\thinspace}
\providecommand{\MR}{\relax\ifhmode\unskip\space\fi MR }
% \MRhref is called by the amsart/book/proc definition of \MR.
\providecommand{\MRhref}[2]{%
  \href{http://www.ams.org/mathscinet-getitem?mr=#1}{#2}
}
\providecommand{\href}[2]{#2}
\begin{thebibliography}{EMS05}

\bibitem[Ber71]{Berg1971AnEO}
I.~David Berg, \emph{{An extension of the Weyl-von Neumann theorem to normal operators}}, Transactions of the American Mathematical Society \textbf{160} (1971), 365--371.

\bibitem[EMS05]{RealAnalysis}
Rami~Shakarchi Elias M.~Stein, \emph{{Real Analysis: Measure Theory, Integration, and Hilbert Spaces}}, pup ed., Princeton Lectures in Analysis, vol.~3, Princeton University Press, 2005.

\bibitem[Fol95]{ACourseInAbstractHarmonicAnalysis}
Gerald~B. Folland, \emph{{A Course in Abstract Harmonic Analysis}}, 1 ed., Studies in Advanced Mathematics, CRC Press, 1995.

\bibitem[Hal50]{MeasureTheory}
P.~R. Halmos, \emph{{Measure Theory}}, 1 ed., Graduate Texts in Mathematics 18, Springer-Verlag New York, 1950.

\bibitem[Hal63]{WhatDoestheSpectralTheoremSay}
\bysame, \emph{{What Does the Spectral Theorem Say?}}, The American Mathematical Monthly \textbf{70} (1963), no.~3, 241--247.

\bibitem[Hal72]{ContinuousFunctionsofHermitianOperators}
\bysame, \emph{{Continuous Functions of Hermitian Operators}}, Proceedings of the American Mathematical Society \textbf{31} (1972), no.~1, 130--132.

\bibitem[Hil91]{Peanocurve}
David Hilbert, \emph{{{\"U}ber die stetige Abbildung einer Linie auf ein Fl{\"a}chenst{\"u}ck}}, Mathematische Annalen \textbf{38} (1891), 459–460.

\bibitem[vN35]{Neumann1935CharakterisierungDS}
J.~von Neumann, \emph{{Charakterisierung des Spektrums Eines Integraloperators}}, Revue de M\'{e}taphysique et de Morale \textbf{42} (1935), no.~4, 17--19.

\end{thebibliography}

\end{document}